\documentclass[11pt]{amsart}
\usepackage[T1]{fontenc}
\usepackage{lmodern}
\usepackage{amssymb}
\usepackage[margin=1.05in]{geometry}
\usepackage{microtype}
\usepackage{enumitem}
\usepackage{needspace}
\usepackage[colorlinks=true,linkcolor=blue,citecolor=blue,urlcolor=blue]{hyperref}
\hypersetup{pdftitle={Traces of powers of canonical ideals},pdfauthor={Le Truong Hoang; Naoyuki Matsuoka}}
\newtheorem{theorem}{Theorem}[section]
\newtheorem{proposition}[theorem]{Proposition}
\newtheorem{lemma}[theorem]{Lemma}
\newtheorem{corollary}[theorem]{Corollary}
\theoremstyle{definition}
\newtheorem{notation}[theorem]{Notation}
\newtheorem{example}[theorem]{Example}
\newtheorem{question}[theorem]{Question}
\newtheorem{conjecture}[theorem]{Conjecture}
\newcommand{\m}{\mathfrak m}
\newcommand{\fkc}{\mathfrak c}
\newcommand{\tr}{\operatorname{tr}_R}
\newcommand{\Hom}{\operatorname{Hom}}
\newcommand{\Soc}{\operatorname{Soc}}
\newcommand{\PF}{\operatorname{PF}}
\newcommand{\type}{\operatorname{r}}
\newcommand{\ord}{\operatorname{ord}}
\newcommand{\Frac}{\operatorname{Q}}
\title{Traces of powers of canonical ideals}
\author{Le Truong Hoang}
\address[Le Truong Hoang]{University of Engineering and Technology, Vietnam National University, Hanoi, Vietnam}
\email{hltruong@vnu.edu.vn}
\author{Naoyuki Matsuoka}
\address[Naoyuki Matsuoka]{Department of Mathematics, School of Science and Technology, Meiji University, 1-1-1 Higashi-mita, Tama-ku, Kawasaki 214-8571, Japan}
\email{naomatsu@meiji.ac.jp}
\date{}
\subjclass[2020]{Primary 13H10; Secondary 13C14, 20M14}
\keywords{Canonical ideal, trace ideal, almost Gorenstein ring, nearly Gorenstein ring, numerical semigroup}
\thanks{The second author was partially supported by JSPS KAKENHI Grant Number JP25K06941.}

\thanks{Work (OpenAI) was used in an iterative drafting process based on the authors' research notes and detailed instructions, and to assist with subsequent revisions of the manuscript. It was also used to assist with searches for related literature, citation checks, and pre-submission review of the manuscript. The authors reviewed and substantially revised the resulting text, verified the mathematical arguments and references, and take responsibility for the final manuscript.}

\begin{document}
\begin{abstract}
Let $(R,\m)$ be a one-dimensional Cohen--Macaulay local ring
with a canonical fractional ideal $R\subseteq K\subseteq\overline R$, and let
$v$ denote its embedding dimension. Put
$\sigma(R)=\sup\{j\ge0\mid\tr(K^j)\supseteq\m\}$.
By definition, $R$ is nearly Gorenstein if and only if $\sigma(R)>0$.
We prove that $R$ is almost Gorenstein if and only if $\sigma(R)\ge v$,
equivalently, $\sigma(R)=\infty$. Thus every finite value of $\sigma(R)$
is at most $v-1$.
We then determine the structure of rings for which $\sigma(R)$ attains
its largest finite value $v-1$, just below the threshold for the almost
Gorenstein property. This occurs if and only if $K^{v-1}\cong\m$ and
$K^v\not\cong\m$.
When these conditions hold, $R$ has type two, with multiplicity $e$ greater than $v$ and $e \equiv 3 \pmod{v-1}$.
As an application, we characterize the numerical semigroup rings $R$ 
with $\sigma(R)=v-1$ as those generated by an arithmetic progression,
of type two and multiplicity greater than $v$.
\end{abstract}
\maketitle

\section{Introduction}

A Cohen--Macaulay local ring with a canonical module is Gorenstein if
and only if its canonical module is isomorphic to the ring.
The almost and nearly Gorenstein properties weaken this condition in
different ways.
Barucci and Fr\"oberg introduced almost Gorenstein rings among
analytically unramified one-dimensional local rings \cite{BF97}.
Goto, Matsuoka, and Phuong extended the definition beyond the
analytically unramified case \cite{GMP13}.
Herzog, Hibi, and Stamate introduced nearly Gorenstein rings through
the trace of the canonical module \cite{HHS19}. They showed that every
one-dimensional almost Gorenstein ring is nearly Gorenstein
\cite[Proposition~6.1]{HHS19}, whereas the converse does not hold in
general.

Throughout this paper, we maintain the following notation.

\begin{notation}\label{not:base}
Let $(R,\m)$ be a one-dimensional
Cohen--Macaulay local ring with the residue field $k=R/\m$. We always assume that there exists a
fractional ideal $K$, isomorphic to the canonical module of $R$, such that
$R\subseteq K\subseteq\overline R$,
where $\overline R$ denotes the integral closure of $R$ in its total
quotient ring $\Frac(R)$. Put
\[
 v=\mu_R(\m),\qquad e=e_0(R),\qquad S=R[K].
\]
Here $\mu_R(-)$ denotes the minimal number of generators and $e_0(R)$
denotes the multiplicity of $R$ with respect to $\m$.
We write $\type(R)=\mu_R(K)$ for the Cohen--Macaulay type of $R$.
For an $R$-module $M$ of finite length, we write $\ell_R(M)$ for its length.
\end{notation}

Under Notation~\ref{not:base}, $R$ is nearly Gorenstein if and only if
$\m\subseteq\tr(K)$. It is almost Gorenstein if and only if
$\m K\subseteq R$ \cite[Theorem~3.11]{GMP13}.

The algebra $S$ is a finite $R$-module, because $K$ is finitely generated and its elements are integral over $R$. 
For the powers of $K$, we put
\[
 K^0=R,\qquad K^{n+1}=K^nK=\sum_{x\in K}xK^n\subseteq\Frac(R)
 \quad(n\ge0).
\]

Since $R\subseteq K$, the powers of $K$ form an ascending chain that
stabilizes at $S$:
\[
 R\subseteq K\subseteq K^2\subseteq\cdots\subseteq S.
\]
The bound $K^{e-1}=S$ follows from Sally and Vasconcelos
\cite[Lemma~2.4 and Theorem~2.5]{SV74}.
Kumashiro proved that a one-dimensional nearly Gorenstein ring is
almost Gorenstein if and only if its canonical reduction number is at
most two \cite[Theorem~3.12]{Kum22}. In our setting, this number is the
least integer $n\ge0$ for which $K^n=S$.
Isobe and Kumashiro \cite[Theorem~1.1]{IK25} gave another
characterization: $R$ is almost Gorenstein if and only if
$\tr(\m K)\supseteq\m$.

We ask how long $\m\subseteq\tr(K^n)$ persists as $n$ increases. The
traces form a descending sequence:
\[
 R=\tr(K^0)\supseteq\tr(K)\supseteq\tr(K^2)\supseteq\tr(K^3)\supseteq\cdots.
\]
We measure how long these traces contain $\m$ by
\[
 \sigma(R)=\sup\{n\ge0\mid\tr(K^n)\supseteq\m\}.
\]
Since the traces form a descending sequence, $R$ is nearly Gorenstein
if and only if $\sigma(R)>0$.
This index is independent of the choice of $K$. Any other canonical
fractional ideal is $\alpha K$ for a unit $\alpha$ of $\Frac(R)$, and multiplication
by $\alpha^j$ gives an isomorphism between their $j$th powers for every
integer $j\ge1$.

If $R$ is Gorenstein, then $K= R$ in our setting, so $\tr(K^n)=R$ for every
$n\ge0$. In particular, $\sigma(R)=\infty$.
More generally, $\sigma(R)=\infty$ whenever $R$ is almost Gorenstein.
The following proposition shows that the converse holds as well.

\Needspace{8\baselineskip}
\begin{proposition}\label{prop:ag}
Under Notation~\ref{not:base}, the following conditions are equivalent.
\begin{enumerate}[label=\textup{(\arabic*)}]
\item $R$ is almost Gorenstein.
\item $\tr(K^j)\supseteq\m$ for every integer $j\ge1$, i.e., $\sigma(R)=\infty$.
\item $\tr(K^v)\supseteq\m$, i.e., $\sigma(R)\ge v$.
\end{enumerate}
\end{proposition}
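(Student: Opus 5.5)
The implication (1)$\Rightarrow$(2) I would prove directly. The implication (2)$\Rightarrow$(3) is trivial. The real work is in (3)$\Rightarrow$(1), where I only have a tentative strategy. Throughout I use that for a fractional ideal $I$ containing a nonzerodivisor, $\tr(I)=(R:I)I$. If $R$ is Gorenstein there is nothing to prove, since then $K=R$. So assume $R$ is not Gorenstein; then $K\ne R$, hence $R:K\subseteq\m$.

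\emph{(1)$\Rightarrow$(2).} By \cite[Theorem~3.11]{GMP13}, almost Gorenstein means $\m K\subseteq R$. Since $1\in K$ we have $\m\subseteq\m K\subseteq R$, so $\m K$ equals $\m$ or $R$. If $\m K=R$, then $K$ is invertible, hence principal (as $R$ is local), hence $K\cong R$, contradicting non-Gorensteinness. Therefore $\m K=\m$. Induction on $j$ then gives $\m K^j=(\m K^{j-1})K=\m$ for all $j\ge1$. In particular $\m\subseteq R:K^j$, so
\[
\m=\m\cdot 1\subseteq (R:K^j)K^j=\tr(K^j)
\]
for every $j$, which is (2). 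Hence $\sigma(R)=\infty$.

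\emph{(3)$\Rightarrow$(1), which I expect to be the main obstacle.} I would argue by contraposition. Assume $\m K\not\subseteq R$ and show that $\tr(K^v)\not\supseteq\m$. Set $J_n=R:K^n$. This is a descending chain of ideals contained in $\m$, and it satisfies $J_nK\subseteq J_{n-1}$. Each trace is
\[
\tr(K^n)=J_nK^n=\sum_i J_n x_i,
\]
where the $x_i$ run over products of generators of $K$.

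The plan is to compare the image of $\tr(K^n)$ in $\m/\m^2$ across $n$. I would aim for a strictly decreasing chain of subspaces $W_n=(\tr(K^n)+\m^2)/\m^2$ of $\m/\m^2$, for as long as $\tr(K^n)\supseteq\m$ fails to stabilize. The key claim would be: if $\tr(K^n)\supseteq\m$ and $\m K\not\subseteq R$, then $J_n\subseteq \m J_{n-1}$ modulo terms that land in $\m^2$.

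Granting this, the condition $\m\subseteq J_nK^n$ forces a filtration of $\m$ with at least $n$ nontrivial steps. These steps can be detected on the $v$-dimensional space $\m/\m^2$, one generator being lost at each stage. Since $\m/\m^2$ has dimension $v$, the containment $\m\subseteq\tr(K^n)$ would then fail by $n=v$.

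To make this concrete I would first test it against the structure of $K^{n}$ near $S$. Concretely, I would use the equality $K^{e-1}=S$ from \cite{SV74}, and Kumashiro's criterion \cite[Theorem~3.12]{Kum22} that a nearly Gorenstein ring which is not almost Gorenstein has $K^2\ne S$. The idea is to locate the first $n$ with $J_{n+1}K\subsetneq J_n$ and show that it produces an element of $\m\setminus(\tr(K^{n+1})+\m^2)$.

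If the $\m/\m^2$ bookkeeping does not work, my fallback is to bound $n$ by the embedding dimension in a different way. I would pass to the $\m$-adic associated graded ring, or reduce modulo a minimal reduction $(t)$ of $\m$. In either setting, the successive quotients $K^{n}/K^{n-1}$ are nonzero and are governed by at most $v-1$ generators of $\m/tR$. This should yield the same bound $\sigma(R)\le v-1$ whenever $R$ is not almost Gorenstein.
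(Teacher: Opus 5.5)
Your (1)$\Rightarrow$(2) is correct and essentially the paper's argument: you reduce to $\m K=\m$, and then $\m\subseteq R:K^j$ gives $\m=\m K^j\subseteq\tr(K^j)$. The step (2)$\Rightarrow$(3) is trivial. The gap is (3)$\Rightarrow$(1): you give no argument, and the mechanism you sketch cannot work.

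\emph{Why the sketch fails.} If $\tr(K^n)\supseteq\m$, then $W_n=(\tr(K^n)+\m^2)/\m^2$ is all of $\m/\m^2$. So the $W_n$ are constant exactly in the range where you need them to decrease strictly. Your key claim, read literally, is also false. For $n\ge2$ we have $J_{n-1}\subseteq\m$, so the claim says $J_n\subseteq\m^2$. Take $H=\langle6,7,8,9\rangle$ as in Example~\ref{ex:sharp}, with $K=R+Rt$. Then $t^7\in J_2=R:K^2$ because $t^7,t^8,t^9\in R$, but $t^7\notin\m^2$. In this example $\tr(K^2)=\m$, and $t^9\cdot t=t^{10}\notin R$ shows that $R$ is not almost Gorenstein. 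Your fallback fails for a more basic reason: it never uses the trace hypothesis, and without it there is no bound in terms of $v$. In the same example $K^n$ grows strictly all the way to $K^5=S$, which is $e-1=5>v-1=3$ strict steps. Kumashiro's criterion and the bound $K^{e-1}=S$ say nothing about $v$ either.

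\emph{The missing idea.} Fix $q$ with $\tr(K^q)=\m$ and count inside a single space rather than across exponents.
\begin{itemize}
\item Put $I=R:K^q$ and $J=IS$. Then $J=\m S$, because $IK^q=\m$ and $K^qS=S$.
\item In $J/\m J$, set $F_i=(IK^i+\m J)/\m J$ for $0\le i\le q$. Then $KF_i=F_{i+1}$ and $SF_0=J/\m J\ne0$.
\item Suppose $F_i=F_{i+1}$ for some $i<q$. Then $F_i$ is stable under $S=R[K]$ and contains $F_0$, so $F_i=J/\m J$. Hence $F_q=(\m+\m J)/\m J=J/\m J$, and Nakayama's lemma gives $\m S=\m$, so $\m K\subseteq R$.
\item Therefore, if $R$ is not almost Gorenstein, the chain $0\ne F_0\subsetneq F_1\subsetneq\cdots\subsetneq F_q$ is strict.
\item Since $\m^2\subseteq\m J$, the last term $F_q\cong\m/(\m\cap\m J)$ is a quotient of $\m/\m^2$. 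This forces $q+1\le v$.
\end{itemize}
Taking $q=v$ gives (3)$\Rightarrow$(1). The objects that grow strictly are the intermediate products $IK^i$. The failure of the almost Gorenstein property enters as $\m S\ne\m$, which is what keeps the chain from stabilizing. Neither ingredient appears in your sketch.
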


If $R$ is not almost Gorenstein, Proposition~\ref{prop:ag} gives
$\sigma(R)\le v-1$. 
Our main theorem determines the structure of the rings for which
$\sigma(R)=v-1$.

If $\sigma(R)=v-1$, then $R$ is not Gorenstein and
$\tr(K^j)\subseteq\m$ for every $j\ge1$ by Lemma~\ref{lem:trace}(1).
Therefore $\sigma(R)=v-1$ is equivalent to the trace conditions in (1) below.

\begin{theorem}\label{thm:main}
Under Notation~\ref{not:base}, the following conditions are equivalent.
\begin{enumerate}[label=\textup{(\arabic*)}]
\item $\tr(K^{v-1})=\m$ and $\tr(K^v)\subsetneq\m$, i.e., $\sigma(R)=v-1$.
\item $K^{v-1}\cong\m$ and $K^v\not\cong\m$ as $R$-modules.
\item There exists a regular element $a\in\m$ such that
      $\m=aK^{v-1}$ and $\m K\ne\m$.
\item $e>v$, and there exist a regular element $a\in\m\setminus\m^2$
and $u\in K$ such that $(a)$ is a reduction of $\m$ and
\[
 K=R+Ru,\qquad
 \m=a(R+Ru+\cdots+Ru^{v-1}),\qquad S=R[u].
\]
\end{enumerate}
When these conditions hold, the following assertions hold true.
\begin{enumerate}[label=\textup{(\alph*)}]
\item $\type(R)=2$, $e>v \ge 3$, and $e\equiv3\pmod{v-1}$.
\item $K\cap\m S=\m K$. Moreover, $K^{e-1}=S$ and $K^n\ne S$ for
      every integer $0\le n <e-1$.
\end{enumerate}
\end{theorem}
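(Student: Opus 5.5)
The plan is to work throughout with the description $\tr(I)=I\cdot(R:I)$, valid for a fractional ideal $I$ with $R\subseteq I\subseteq\overline R$, together with $\tr(\m)=\m$ for non-DVR $R$. The equivalences go around the cycle $(2)\Leftrightarrow(3)$, $(3)\Rightarrow(4)\Rightarrow(3)$, $(2)\Rightarrow(1)$, $(1)\Rightarrow(3)$, in that order.

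\emph{$(2)\Leftrightarrow(3)$.} An isomorphism of fractional ideals containing regular elements is multiplication by a unit of $\Frac(R)$. So $K^{v-1}\cong\m$ means $\m=aK^{v-1}$ for some $a\in\Frac(R)$. Since $1\in K^{v-1}$, we get $a\in\m$, and $a$ is regular. Then $K^v\cong\m$ would force $aK^v=b^{-1}\m$ type relations. Comparing with $\m K=aK^v$, one sees that $K^v\cong\m$ iff $\m K\cong\m$. I expect this in turn to be equivalent to $\m K=\m$, using that $\m\subseteq\m K\subseteq\overline R$ and a unit comparison via lengths $\ell(\overline R/\m)$.

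\emph{$(2)\Rightarrow(1)$.} We have $\tr(K^{v-1})=\tr(\m)=\m$. If also $\tr(K^v)\supseteq\m$, Proposition~\ref{prop:ag} makes $R$ almost Gorenstein, so $\m K\subseteq R$, hence $\m K=\m$. This contradicts (3).

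\emph{$(3)\Rightarrow(4)$.} This is the counting step:
\[
v=\mu(\m)=\mu(K^{v-1}).
\]
The chain $R\subsetneq K\subsetneq\cdots\subsetneq K^{v-1}$ is strict, since $\m K\ne\m$ prevents stabilization: if $K^{j}=K^{j+1}$ with $j<v-1$, then $K^{v-1}=S$ and $\m S=\m$. Next, $1$ is part of a minimal generating set of each $K^j$, and the images of successive powers give $\mu(K^{j+1})\ge\mu(K^j)+1$. Hence $\mu(K^{v-1})\ge v$, with equality forcing $\mu(K)=2$, say $K=R+Ru$, and $K^{j}=R+Ru+\cdots+Ru^{j}$. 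Then $S=R[u]$, $\m=a(R+\cdots+Ru^{v-1})$, and $\m S=aS$. This gives $\m^{n}S=a^nS$, so $(a)$ is a reduction of $\m$. Also $a\notin\m^2$, as it is a minimal generator ($a\cdot1$). Finally $e>v$ should follow from $e=\ell(S/aS)$ compared with $\ell(R/aR)$, because $S\ne K^{v-1}$ (else $\m K=\m$). The converse $(4)\Rightarrow(3)$ is immediate.

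\emph{$(1)\Rightarrow(3)$.} I expect this to be the main obstacle. From $\m=K^{v-1}(R:K^{v-1})$ we must extract a single $a$ with $\m=aK^{v-1}$. My first attempt: pick $a\in(R:K^{v-1})$ generating a minimal reduction of $\m$ (possible after enlarging $k$ by a standard reduction). Then show $aK^{v-1}=\m$ by comparing lengths, $\ell(K^{v-1}/aK^{v-1})=e=\ell(R/aR)$, against the containment $aK^{v-1}\subseteq\m$. The containments $\tr(K^j)\supseteq\m$ for all $j\le v-1$ should be what forces the strict growth of $\mu(K^j)$ and hence the equality.

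\emph{Assertions (a) and (b).} From (4), $\type(R)=\mu(K)=2$. The inequality $v\ge3$ holds because $v=2$ would give $K^{1}\cong\m$, i.e.\ almost Gorenstein. For the congruence, use the relation $\m=aK^{v-1}$: multiplying by $K^{j}$ gives $\m K^{j}=aK^{v-1+j}$. This makes the length increments $\ell(K^{j+1}/K^j)$ periodic with period $v-1$. Summing $\ell(S/R)$ two ways, via $e=\ell(S/aS)$, yields $e\equiv3\pmod{v-1}$ and $K^{e-1}=S$ with $K^{n}\ne S$ for $n<e-1$ (each step adds length one). The equality $K\cap\m S=\m K$ follows from $\m S=aS$ and the explicit basis $1,u$.
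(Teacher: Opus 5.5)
Your proposal leaves the two substantive steps unproved, and its argument for (a)--(b) rests on a false claim. For (1)$\Rightarrow$(3) you give no mechanism. Since $aK^{v-1}\subseteq\m\subseteq R\subseteq K^{v-1}$, your length comparison only yields $\ell_R(R/aK^{v-1})=e-\ell_R(K^{v-1}/R)$. So $aK^{v-1}=\m$ amounts to $\ell_R(K^{v-1}/R)=e-1$, and nothing in your sketch supplies this; enlarging the residue field would also leave a descent problem for $a$. The missing idea is to work in the $S$-module $J/\m J$, where $J=(R:K^{v-1})S=\m S$. Let $F_i$ be the image of $(R:K^{v-1})K^i$. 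Then $KF_i=F_{i+1}$ and $SF_0=J/\m J$. Since $S=R[K]$, an equality $F_i=F_{i+1}$ would force $F_{v-1}=J/\m J$, hence $\m S=\m$ by Nakayama, contradicting Proposition~\ref{prop:ag}. So the chain is strict. On the other hand, $F_{v-1}$ is a quotient of $\m/\m^2$, so $\dim_kF_i=i+1$. Any $a\in R:K^{v-1}$ whose image spans $F_0$ then gives $\m=aK^{v-1}+\m^2$.

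In (3)$\Rightarrow$(4), the inequality $\mu_R(K^{j+1})\ge\mu_R(K^j)+1$ is unjustified, and the reason you offer fails: $K^j/\m K^j\to K^{j+1}/\m K^{j+1}$ need not be injective. For $H=\langle 8,9,10,21\rangle$, $t^{12}$ is a minimal generator of $K$, yet $t^{12}=t^{10}t^2\in\m K^2$. What grows by exactly one at each step is $\dim_k((K^j+aS)/aS)$. That only gives $\mu_R(K^j)\ge j+1$, the wrong direction for $\mu_R(K)\le2$. Given that $\dim_k((K+aS)/aS)=2$, the equality $\type(R)=2$ is equivalent to $K\cap aS=\m K$, and this is the deepest part of the paper. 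It computes $\Soc(R/aR)=(\m^s+aR)/aR=(\fkc+aR)/aR$, using the basis $1,\overline u,\dots,\overline u^{e-1}$ of $S/aS$ and $\m^n=R\cap a^nS$. From this it deduces $R:\m=R+a^{-1}\fkc$. Canonical duality then gives $K:(R:\m)=\m K$ and $K:(R+a^{-1}\fkc)=K\cap aS$.

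Your argument for (a)--(b) claims that $\ell_R(K^{j+1}/K^j)$ is periodic and equal to one, and both claims are false. For $H=\langle 6,7,8,9\rangle$, which satisfies the theorem with $v=4$, $e=6$ and $K=R+Rt$, the monomials $t$ and $t^{10}$ lie in $K\setminus R$. The increments are $2,2,1,1,1$, so $\ell_R(S/R)=7\ne e-1$. The statement $K^{e-1}=S\ne K^n$ for $n<e-1$ comes instead from the fact that the image of $K^n$ in $S/aS$ is spanned by $1,\overline u,\dots,\overline u^n$. The congruence comes from $\dim_k\Soc(R/aR)=e-1-(s-1)(v-1)$ with $s=\lceil(e-1)/(v-1)\rceil$, combined with $\dim_k\Soc(R/aR)=\type(R)=2$.

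There are also some smaller issues. In (2)$\Leftrightarrow$(3), a length comparison with $\overline R$ is unavailable when $\overline R$ is not module-finite. Instead, note that $K^v\cong\m$ gives $\tr(K^v)=\m$, and Proposition~\ref{prop:ag} then gives $\m K=\m$. The direction (4)$\Rightarrow$(3) is not immediate: proving $\m K\ne\m$ uses $e>v$ and the reduction hypothesis, since $\m K=\m$ would give $\m^2=a\m$ and hence $e=v$. Finally, $v\ge3$ holds because rings of embedding dimension at most two are Gorenstein, not because $K\cong\m$ would mean almost Gorenstein.
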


For numerical semigroup rings, which are typical examples of one-dimensional Cohen--Macaulay local rings, the expressions in Theorem~\ref{thm:main}(4) have
a concrete meaning. Corollary~\ref{cor:semigroup} shows that the
minimal generators of the semigroup form an arithmetic progression.
Conversely, the ring of a numerical semigroup generated by an arithmetic
progression, of type two and multiplicity greater than $v$, satisfies
the equivalent conditions of Theorem~\ref{thm:main}. The theorem therefore gives a
ring-theoretic description of this class of numerical semigroup rings,
valid without assuming that $R$ is a semigroup ring.
The type of a numerical semigroup generated by a generalized arithmetic
sequence is already determined by the pseudo-Frobenius numbers computed
by Matthews \cite[Lemma~2.7]{Matthews04}. Once arithmetic progression
and type two are known, the congruence $e\equiv3\pmod{v-1}$ follows
from that formula. In Theorem~\ref{thm:main}, the multiplicity assertion
is obtained for general local rings without using a semigroup description.

The determination of the type and the generators from $\sigma(R)=v-1$
makes it natural to consider what remains true when $\sigma(R)\le v-2$. A comparable
description in this range appears more difficult, even for numerical
semigroup rings. We give an example with $\sigma(R)=v-2$ at the end of
the paper.

In Section~2 we prove Proposition~\ref{prop:ag} and prepare for the
proof of Theorem~\ref{thm:main}, which is given in Section~3.
In Section 4 we apply Theorem~\ref{thm:main} to numerical semigroup rings and 
determine when the traces of powers of \(K\) equal \(\mathfrak m\) 
for semigroups generated by arithmetic progressions. We conclude 
with examples, a conjecture concerning a smaller value of \(\sigma(R)\),
and a question concerning ring-theoretic characterizations of arithmetic
progression generation.

\section{Traces and the embedding dimension}

Under Notation~\ref{not:base}, we first describe homomorphisms between
fractional ideals to compute the traces of the powers of $K$.
Let $I,J$ be fractional ideals
of $R$ containing regular elements. Since
$I\otimes_R\Frac(R)=J\otimes_R\Frac(R)=\Frac(R)$, every
$R$-homomorphism $I\to J$ is multiplication by an element of $\Frac(R)$.
Therefore
\[
 \Hom_R(I,J)\cong J:I
 =\{x\in\Frac(R)\mid xI\subseteq J\}.
\]
We use this colon notation throughout the proofs.

For an $R$-module $M$, its trace is defined by
$\tr(M)=\sum_{\varphi\in\Hom_R(M,R)}\varphi(M)$.
In this paper, we take traces only of fractional ideals containing regular elements.
Taking $J=R$ in the description above therefore gives
\[
 \tr(I)=(R:I)I.
\]
We will also use the trace of $\m$. If $R$ is not a discrete valuation ring, then
$R:\m=\m:\m$ by \cite[Lemma~3.15(3)]{GMP13}, and hence
$\tr(\m)=\m$.

The following properties of the traces of $K^j$ will be used to prove
Proposition~\ref{prop:ag}.

\begin{lemma}\label{lem:trace}
The following assertions hold true.
\begin{enumerate}[label=\textup{(\arabic*)}]
\item $\tr(K^{j+1})\subseteq\tr(K^j)$ for every integer $j\ge0$.
      If $R$ is not Gorenstein, then $\tr(K^j)\subseteq\m$ for every $j\ge1$.
\item If $\m K=\m$, then $\tr(K^j)\supseteq\m$ for every integer $j\ge1$.
\end{enumerate}
\end{lemma}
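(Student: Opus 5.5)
The plan is to use only the formula $\tr(I)=(R:I)I$ for fractional ideals containing regular elements, together with the chain $R\subseteq K\subseteq K^2\subseteq\cdots$.

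For (1), the first inclusion comes from the ascending chain. Since $R\subseteq K$, we have $K^j\subseteq K^{j+1}$, and therefore $R:K^{j+1}\subseteq R:K^j$. Multiplying, I get
\[
(R:K^{j+1})K^{j+1}=(R:K^{j+1})K\cdot K^{j}.
\]
Now $(R:K^{j+1})K\subseteq R:K^j$, because $(R:K^{j+1})K\cdot K^j\subseteq R$. Hence
\[
\tr(K^{j+1})\subseteq (R:K^j)K^j=\tr(K^j).
\]
For the second statement of (1), it then suffices by this monotonicity to show $\tr(K)\subseteq\m$ when $R$ is not Gorenstein. I will use the standard fact that $\tr(K)=R$ exactly when $K$ has a free summand. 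By a rank argument ($K$ has rank one), this happens exactly when $K\cong R$, i.e. when $R$ is Gorenstein. To avoid quoting this, I can argue directly. If $\tr(K)=R$, then since $R$ is local some product $xy$ with $x\in R:K$ and $y\in K$ is a unit. Then multiplication by $x$ gives a surjection $K\to R$, which splits. Since $K$ has rank one, the kernel is torsion inside a torsion-free module, hence zero. So $K\cong R$, and $R$ is Gorenstein.

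For (2), assume $\m K=\m$. I will show $\m K^j=\m$ by induction on $j$. Indeed,
\[
\m K^{j+1}=(\m K^j)K=\m K=\m.
\]
Hence $\m\subseteq R:K^j$, since $\m K^j=\m\subseteq R$. Because $1\in K^j$, this gives
\[
\m=\m\cdot 1\subseteq (R:K^j)K^j=\tr(K^j).
\]

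There is no real obstacle; the only point needing care is the rank-one argument identifying $\tr(K)=R$ with Gorensteinness.
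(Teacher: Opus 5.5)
Your proof is correct. Part (2) is the paper's argument; you just make the induction $\m K^{j+1}=(\m K^j)K=\m$ explicit.

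Part (1) takes a different route. For the monotonicity $\tr(K^{j+1})\subseteq\tr(K^j)$, the paper quotes a general trace inclusion from Lindo's work. You instead give the self-contained fractional-ideal computation: $(R:K^{j+1})K\subseteq R:K^j$, hence $(R:K^{j+1})K^{j+1}\subseteq (R:K^j)K^j$. Your preliminary remark that $R:K^{j+1}\subseteq R:K^j$ is true but never used.

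For $\tr(K^j)\subseteq\m$, the paper treats each $j\ge1$ directly. From $K\ne R$ it gets $K^j\not\subseteq R$, so $R:K^j\subseteq\m$, and then $\tr(K^j)\subseteq \m S\cap R=\m$. This last step uses that $S=R[K]$ is a finite integral extension of $R$. You instead reduce to $j=1$ by monotonicity and show that $\tr(K)=R$ forces $K\cong R$, via a unit product $xy$ and a splitting.

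Your version uses nothing about $K\subseteq\overline R$ or the finiteness of $S$; it is the general fact that a rank-one torsion-free module with unit trace is free. The paper's version avoids any splitting argument and is phrased in terms of $\m S$, which is the object it works with in Lemma~\ref{lem:growth}.

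Your rank-one step can be shortened. Since $xy$ is a unit of $R$ and $y\in\Frac(R)$, the element $x$ is a unit of $\Frac(R)$. So multiplication by $x$ is injective on $K$, and $xK$ is an ideal of $R$ containing a unit, hence $xK=R$. This gives $K\cong R$ without any appeal to splitting or rank.
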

\begin{proof}
(1) The first inclusion follows from \cite[Proposition~2.8(i)]{Lin17}.
Let $j \ge 1$ be an integer and suppose $R$ is not Gorenstein. Since
$R\subseteq K\subseteq K^j$,
$R:K^j$ is a proper ideal of $R$ (otherwise
$K\subseteq K^j\subseteq R$, so $K=R$), hence is contained in $\m$.
As $S$ is integral over $R$, we have $\m S\cap R=\m$. Therefore
$\tr(K^j)=(R:K^j)K^j\subseteq\m S\cap R=\m$.

(2) Let $j \ge 1$ be an integer. Since $\m K^j=\m$ and $\m\subseteq R:K^j$, we have
$\m=\m K^j\subseteq(R:K^j)K^j=\tr(K^j)$.
\end{proof}

The next lemma bounds the exponent of a power whose trace equals $\m$
and gives a special kind of element $a$ when the bound is attained.

\begin{lemma}\label{lem:growth}
Suppose $R$ is not almost Gorenstein.
Then the following assertions hold true.
\begin{enumerate}[label=\textup{(\arabic*)}]
\item For every integer $q\ge1$, if $\tr(K^q)=\m$, then $q\le v-1$.
\item If $\tr(K^{v-1}) = \m$, then there is a regular element $a\in\m\setminus\m^2$ such that
\[
 \m=aK^{v-1},\qquad \m S=aS,\qquad\text{and}\qquad \m\cap a^2S=\m^2.
\]
Furthermore, for this choice of $a$, we have
\[
 \dim_k ((K^i+aS)/aS)=i+1\quad(0\le i\le v-1)\qquad \text{and} \qquad 
 \dim_k(S/aS)>v.
\]
\end{enumerate}
\end{lemma}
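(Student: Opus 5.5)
The plan is to deduce both parts from two facts. First, $\tr(K^q)=\m$ forces $K^q\cong\m$, realised as $\m=aK^q$ for a suitable regular $a$. Second, the images of the $K^i$ in $S/aS$ grow by at least one dimension at each step. Since $R$ is Gorenstein would imply almost Gorenstein, $R$ is not Gorenstein and hence not a DVR. So $R:\m=\m:\m$ and $\tr(\m)=\m$. Moreover, not almost Gorenstein means $\m K\not\subseteq R$, so $K\ne R$ and $\m K\ne\m$.

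\textbf{Step 1 (principal presentation).} Suppose $\tr(K^q)=\m$ and put $I=R:K^q$. By Lemma~\ref{lem:trace}(1), $I\subseteq\m$ and $IK^q=\m$. Multiplying by $S$ and using $K^qS=S$ gives $IS=\m S$. I would then produce a regular $a\in I$ with $aS=IS=\m S$. The first attempt is to show that $\m S$ is principal. Starting from $IK^q=\m$, one gets $K^q\subseteq R:I$ and $I\subseteq R:K^q$. Combining these with $R:\m=\m:\m$, the aim is to show that $I$ is a stable ideal, i.e.\ $I^2=bI$ for some $b\in I$. A general element $a$ of $I$, chosen after the usual passage to an infinite residue field via $R[X]_{\m R[X]}$, then satisfies $IS=aS$. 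Granting this, $aK^q\subseteq IK^q=\m$. Conversely $\m=IK^q\subseteq aS\cap R$, and comparing $\m\subseteq a(S\cap a^{-1}R)$ with $K^q$ should give $\m=aK^q$. Then $K^q\cong\m$, so $\mu_R(K^q)=v$.

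\textbf{Step 2 (growth).} For $0\le i$, set $V_i=(K^i+aS)/aS\subseteq S/aS$, a $k$-vector space because $\m S=aS$. Then $V_0=k$, the sequence $V_i$ is ascending, and $V_iV_j\subseteq V_{i+j}$. If $V_{i}=V_{i+1}$ for some $i<q$, then $K^{i+1}\subseteq K^i+aS$. Multiplying by $K^{q-i-1}$ and iterating would give $K^q\subseteq K^{q-1}+aS$, and so on down the chain. From $\m=aK^q$ this should yield $\m K\subseteq \m+aK^{q-1}\subseteq R$, contradicting the failure of the almost Gorenstein property. Hence $\dim_k V_i\ge i+1$ for $i\le q$. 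Since $\dim_k V_q\le\mu_R(K^q)=v$ by Nakayama, this gives $q+1\le v$, which is part (1).

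\textbf{Step 3 (the equality case).} When $q=v-1$, all the inequalities $\dim_k V_i\ge i+1$ become equalities for $0\le i\le v-1$. The map $K^{v-1}/\m K^{v-1}\to V_{v-1}$ is then an isomorphism, and it follows that $a\notin\m^2$. For the equality $\m\cap a^2S=\m^2$, write $\m^2=a^2K^{2v-2}$. One inclusion is clear. For the other, $\m\cap a^2S=a(K^{v-1}\cap aS)$, and I would show $K^{v-1}\cap aS=aK^{v-1}$ using the isomorphism just noted. Finally, $\dim_k(S/aS)>v$ holds because $\dim_k(S/aS)=\ell(S/\m S)=e$. If this dimension were at most $v$, then $V_{v-1}=S/aS$, which would force $K^{v-1}+aS=S$ and then $K^{v-1}=S$ by Nakayama. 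This would make $\m=aS$ an ideal of $S$, so $\m K=\m$, a contradiction.

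I expect the main obstacle to be Step~1, namely extracting a single regular element $a$ with $\m=aK^q$ from the bare equality $(R:K^q)K^q=\m$. This is where the identity $R:\m=\m:\m$ and the one-dimensional reduction theory must be used.
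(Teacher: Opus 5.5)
\textbf{Step~1 fails.} You flag Step~1 as the hardest step, but as stated it is false, and Steps~2 and~3 depend on it. The claim that $\tr(K^q)=\m$ forces $\m=aK^q$ (hence $K^q\cong\m$) does not hold for $q<v-1$.

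Take $H=\langle 6,7,8,9\rangle$ from the paper's Example. There $R$ is not almost Gorenstein, $K=R+Rt$, and $I=R:K=(t^6,t^7,t^8)$. So $IK=\m$ and $IS=t^6k[[t]]$ is even principal. Yet $\mu_R(K)=2<4=v$, so no $a$ satisfies $\m=aK$; concretely, $t^8\notin t^6K$. The failure is exactly at ``comparing with $K^q$ should give $\m=aK^q$.'' No argument from stability of $I$ and $R:\m=\m:\m$ can work unless it uses $q=v-1$ essentially, and your sketch does not.

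Consequently the bound $\dim_kV_q\le\mu_R(K^q)=v$ in Step~2 is unsupported, since a priori $\mu_R(K^q)$ is only bounded by $e$. So part~(1) is not proved.

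\textbf{The missing idea.} Do not choose $a$ first. Measure growth inside $J/\m J$, where $I=R:K^q$ and $J=IS=\m S$, instead of inside $S/aS$. Put $F_i=(IK^i+\m J)/\m J$ for $0\le i\le q$.
\begin{itemize}
\item Since $KF_i=F_{i+1}$, $S=R[K]$ and $SF_0=J/\m J$, a stall $F_i=F_{i+1}$ gives $F_q=J/\m J$. That means $J=\m+\m J$, so Nakayama gives $J=\m$, contradicting $\m S\ne\m$.
\item The upper bound comes from $\m$, not from $K^q$: $F_q=(\m+\m J)/\m J\cong\m/(\m\cap\m J)$ is a quotient of $\m/\m^2$. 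Hence $q+1\le\dim_kF_q\le v$, which is part~(1).
\item When $q=v-1$ all inequalities are equalities, so $\dim_kF_0=1$ and $\m\cap\m J=\m^2$. Choose $a\in I$ whose image spans $F_0$. Then $I\subseteq aR+\m J$, so $\m\subseteq aK^{v-1}+\m J$ and $\m=aK^{v-1}+\m^2$. Nakayama gives $\m=aK^{v-1}$, and then $J=aS$ and $\m J=a^2S$.
\end{itemize}
This is how the element you identified as the main obstacle actually arises.

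\textbf{Remaining corrections.} With $\m=aK^{v-1}$ and $\m S=aS$ in hand, your Steps~2 and~3 essentially match the paper's, with three fixes.
\begin{itemize}
\item The equality you need is $K^{v-1}\cap aS=\m K^{v-1}$, not $aK^{v-1}$, which equals $\m$.
\item The identity $\dim_k(S/aS)=e$ presupposes that $(a)$ is a reduction of $\m$, which is not yet known at this point. Your Nakayama argument for $\dim_k(S/aS)>v$ does not need it, so just drop it.
\item In Step~2 the inclusion $\m K\subseteq\m+aK^{q-1}$ is not justified as written. Argue instead that $K^{i+1}\subseteq K^i+aS$ gives $S=K^i+\m S$. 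Nakayama then yields $S=K^i$, hence $\m=aS$ and $\m K=\m$.
\end{itemize}
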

\begin{proof}
By \cite[Theorem~3.11]{GMP13} and $R\subseteq K\subseteq S$, the
assumption that $R$ is not almost Gorenstein gives $\m S\ne\m$.

To prove (1), fix an integer $q\ge1$ such that $\tr(K^q)=\m$.
Put $I=R:K^q$ and $J=IS=\m S$; the last equality
follows from $IK^q=\tr(K^q)=\m$ and $K^qS=S$.
Since $J$ and $\m J$ are $S$-modules, $S$ acts on $J/\m J$.
For each integer $0\le i\le q$, let
\[
 F_i=(IK^i+\m J)/\m J
\]
be a subspace of $J/\m J$. Then, for every $0\le i<q$, $KF_i=(IK^{i+1}+\m J)/\m J=F_{i+1}$. 
Moreover, $SF_0=J/\m J$, because $IS=J$. In particular, $F_0\ne0$; otherwise
$J=\m J$, contrary to Nakayama's lemma and $\m\subseteq J$.

Suppose $F_i=F_{i+1}$ for some integer $0\le i<q$. Then
$KF_i=F_i$, so $SF_i=F_i$ because $S=R[K]$. Since $F_0\subseteq F_i$,
\[
 J/\m J=SF_0\subseteq SF_i=F_i\subseteq J/\m J.
\]
Thus $F_i=J/\m J$, and hence $F_q=J/\m J$. The latter equality is
impossible: $F_q=(\m+\m J)/\m J$ would give $J=\m+\m J$, so
Nakayama's lemma applied to $J/\m$ would give $J=\m$, contrary to
$J=\m S\ne\m$.
Consequently,
\[
 0\ne F_0\subsetneq F_1\subsetneq\cdots\subsetneq F_q
 \subsetneq J/\m J.
\]
On the other hand, $F_q\cong\m/(\m\cap\m J)$ and
$\m^2\subseteq\m\cap\m J$. Hence
\[
 q+1\le\dim_kF_q\le\mu_R(\m)=v,
\]
proving (1).

To prove (2), suppose $\tr(K^{v-1})=\m$ and apply the preceding
construction with $q=v-1$.
The dimension count in (1) gives
\[
 \dim_kF_i=i+1\quad(0\le i\le q),\qquad
 \m\cap\m J=\m^2.
\]
Choose $a\in I$ whose image spans $F_0$. Then
$I\subseteq aR+\m J$. Since $\m J$ is an $S$-module,
multiplication by $K^q$ gives $\m\subseteq aK^q+\m J$.
Using $aK^q\subseteq\m$ and $\m\cap\m J=\m^2$, we obtain
\[
 \m=aK^q+(\m\cap\m J)=aK^q+\m^2.
\]
Nakayama's lemma gives $\m=aK^q$.
The ideal $\m$ contains a regular element, so the equality $\m=aK^q$
shows that $a$ is regular. Also $a\in\m$ and $a\notin\m J$;
therefore $a\notin\m^2$. We now have $J=aS$ and $\m J=a^2S$, which
prove the three asserted ideal equalities.

Since $a\in I\subseteq aR+\m J$ and $\m J=a^2S$, we have, for
$0\le i\le q$,
\[
 F_i= (IK^i+\m J)/\m J
    =(aK^i+a^2S)/a^2S
    \cong(K^i+aS)/aS.
\]
Also $J/\m J=aS/a^2S\cong S/aS$. Hence
$\dim_k((K^i+aS)/aS)=i+1$ for $0\le i\le q$, and
$\dim_k(S/aS)=\dim_k(J/\m J)>v$.
\end{proof}

We apply Lemmas~\ref{lem:trace} and~\ref{lem:growth} to prove
Proposition~\ref{prop:ag}.

\begin{proof}[Proof of Proposition~\ref{prop:ag}]
If $R$ is Gorenstein, then $K=R$ and all three conditions hold.
We may therefore assume that $R$ is not Gorenstein. By
Lemma~\ref{lem:trace}(1), for every integer $j\ge1$,
\[
 \tr(K^j)\supseteq\m\quad\Longleftrightarrow\quad\tr(K^j)=\m.
\]
By \cite[Theorem~3.11]{GMP13}, $R$ is almost Gorenstein if and only if
$\m K\subseteq R$, which is equivalent to $\m K=\m$.
Thus (1) implies (2) by Lemma~\ref{lem:trace}, and (2) implies (3).
Suppose (3) holds and $R$ is not almost Gorenstein. Then
$\tr(K^v)=\m$ by Lemma~\ref{lem:trace}(1), but
Lemma~\ref{lem:growth}(1) with $q=v$ gives $v\le v-1$, a contradiction.
Hence (3) implies (1).
\end{proof}

\section{Proof of the main theorem}

We keep the standard notation as in Notation \ref{not:base}.
In this section, we prove Theorem~\ref{thm:main}.
We first prove that (1)--(3) are equivalent and that (4) implies (3).
The main step is to derive (4) from (1)--(3); the same argument also
proves (a) and (b).

If $R$ is Gorenstein, then $K=R$, so none of conditions (1)--(3)
in Theorem~\ref{thm:main} holds.
Condition (4) also fails. Indeed, $K=R$ would give $\m=aR$, so $R$ would
be a discrete valuation ring with $e=v=1$.
We may therefore assume that $R$ is not Gorenstein. In particular,
$v\ge3$, since a one-dimensional Cohen--Macaulay local ring of embedding
dimension at most two is Gorenstein.

(1) $\Rightarrow$ (3): By Proposition~\ref{prop:ag}, $R$ is not almost
Gorenstein. Hence $\m K\ne\m$ and $\m S\ne\m$, so
Lemma~\ref{lem:growth}(2) gives $\m=aK^{v-1}$ for a regular element
$a\in\m\setminus\m^2$.

(3) $\Rightarrow$ (2): Choose $a$ as in (3). Multiplication by $a$ gives $K^{v-1}\cong\m$.
If $K^v\cong\m$, then $\tr(K^v)=\tr(\m)=\m$, and
Proposition~\ref{prop:ag} gives $\m K=\m$, a contradiction.

(2) $\Rightarrow$ (1): Trace is invariant under isomorphism, so
$\tr(K^{v-1})=\tr(\m)=\m$. If $\tr(K^v)=\m$, then
Proposition~\ref{prop:ag} gives $\m K=\m$. Choose an isomorphism
$K^{v-1}\longrightarrow\m$. It is multiplication by a unit $\alpha$ of
$\Frac(R)$, by the description of homomorphisms in Section~2;
hence $\alpha K^v=\m K=\m$, contrary to $K^v\not\cong\m$.
Since $\tr(K^v)\subseteq\m$ by Lemma~\ref{lem:trace}(1),
we obtain $\tr(K^v)\subsetneq\m$.

(4) $\Rightarrow$ (3): Choose $a$ as in (4). We have $\m=aK^{v-1}$. Suppose $\m K=\m$.
Then $\m^2=aK^{v-1}\m=a\m$. Since $a\notin\m^2$, its image is
part of a minimal generating set of $\m$. As $\m^2=a\m\subseteq aR$,
we have
\[
 \ell_R(\m/aR)
 =\dim_k\bigl(\m/(aR+\m^2)\bigr)=v-1.
\]
Consequently,
\[
 e=\ell_R(R/aR)=1+\ell_R(\m/aR)=1+(v-1)=v,
\]
where the first equality holds because $(a)$ is a reduction of $\m$.
This contradicts $e>v$, so $\m K\ne\m$.

For the rest of this section, assume the equivalent conditions (1)--(3).
Put $q=v-1$, and choose $a$ as in Lemma~\ref{lem:growth}(2).
Since $K^n=S$ for $n\gg0$,
\[
 \m^{n+1}=a^{n+1}K^{(n+1)q}=a^{n+1}S=a\m^n
 \qquad(n\gg0).
\]
Thus $(a)$ is a reduction of $\m$ and $e=\ell_R(R/aR)$.
Since $S/R$ has finite length, multiplication by $a$ on $S/R$ has
kernel and cokernel of equal length. Hence
\[
 e=\ell_R(R/aR)=\ell_R(S/aS)=\dim_k(S/aS)>v,
\]
where the last equality uses $\m S=aS$, and the inequality follows
from Lemma~\ref{lem:growth}(2).

For $x\in S$, write $\overline{x}=x+aS\in S/aS$.
We first find an element $u\in K$ whose powers describe $\m$ and $S$.

\begin{lemma}\label{lem:powers}
There exists $u\in K$ such that, with $L=R+Ru$, the following assertions hold.
\begin{enumerate}[label=\textup{(\arabic*)}]
\item
$K^q=L^q$, $\m=aL^q$, $S=R[u]$, and $L^{e-1}=S$.
Moreover, $1,\overline{u},\ldots,\overline{u}^{e-1}$ form a
$k$-basis of $S/aS$.
\item For every integer $n\ge0$,
$\m^n=\sum_{i=0}^{nq}R\,(a^nu^i)=R\cap a^nS$ and $aR\cap\m^{n+1}=a\m^n$.
\end{enumerate}
\end{lemma}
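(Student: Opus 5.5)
Throughout, I use the setup fixed before the lemma: $q=v-1$, $a$ as in Lemma~\ref{lem:growth}(2), $\m S=aS$, $\m\cap a^2S=\m^2$, $\dim_k((K^i+aS)/aS)=i+1$ for $0\le i\le q$, and $e=\dim_k(S/aS)>v$.

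\textbf{Choice of $u$.} Work in the $k$-algebra $A=S/aS$, which has dimension $e$. It is a local Artinian $k$-algebra: $S$ is module-finite over $R$ and $\m S=aS$, and I will check locality from the chain below. Put $V_i=(K^i+aS)/aS$. Then $V_0=k\cdot\overline1$, $\dim V_1=2$, and $V_{i+1}=V_1V_i$. I pick $u\in K$ with $\overline u\in V_1\setminus V_0$, so $V_1=k+k\overline u$. Induction gives $V_i=k+k\overline u+\cdots+k\overline u^{\,i}$. Hence $\overline{R[u]}=k[\overline u]$, and $S=R[K]$ gives $A=k[\overline u]$. The dimension jumps by exactly one for each $i\le q$. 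Once the powers stop growing they stabilize, so $1,\overline u,\dots,\overline u^{\,e-1}$ form a $k$-basis of $A$. In particular $\overline u^{\,e}\in\sum_{i<e}k\overline u^{\,i}$ and $k[\overline u]\cong k[t]/(f)$ with $\deg f=e$. Nakayama over $R$, using $\m S=aS$, then gives $S=R[1,u,\dots,u^{e-1}]=L^{e-1}$ with $L=R+Ru$, and $S=R[u]$.

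\textbf{$K^q=L^q$ and $\m=aL^q$.} Since $V_q$ is spanned by $1,\dots,\overline u^{\,q}$, we get $K^q\subseteq L^q+aS=L^q+\m S$. I do not want to multiply by $a$ and quote $\m=aK^q$ directly. Instead I argue as follows:
\[
\m=aK^q\subseteq aL^q+a^2S,
\qquad
aL^q\subseteq aK^q=\m .
\]
This gives $\m=aL^q+(\m\cap a^2S)=aL^q+\m^2$, and Nakayama yields $\m=aL^q$. Then $aK^q=aL^q$, and cancelling the regular element $a$ gives $K^q=L^q$. This completes (1).

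\textbf{Part (2).} I induct on $n$. Since $\m=aL^q$ and $L^q=\sum_{i\le q}Ru^i$, we get
\[
\m^n=a^nL^{nq}=\sum_{i=0}^{nq}Ra^nu^i .
\]
The inclusion $\m^n\subseteq R\cap a^nS$ is clear. For the reverse inclusion, the natural route is the one I expect to be the main obstacle. Take $x\in R\cap a^nS$, so $x\in\m^{n}$ should follow from $\m\cap a^2S=\m^2$ generalized. I would try to prove $R\cap a^{n}S=\m^n$ by induction using the basis of $A$. Write $x=a^n s$ and expand $s$ modulo $aS$ in the basis $\overline u^{\,i}$.

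The key point is to compare with $L^{nq}$. Since $L^{nq}\supseteq L^{q}$ contains $1,u,\dots,u^{q}$, and $nq$ may exceed $e-1$, for large $n$ we get $a^nS=a^nL^{nq}=\m^n$ outright. For small $n$ I need finer control. Here I would use that $x\in R$ and $a^ns\in R$ together force the coefficients to match. Concretely, write
\[
s\equiv\sum_{i<e}c_iu^i \pmod{aS},
\]
and note that $a^nu^i\in R$ exactly when $i\le nq$. Subtracting $\sum_{i\le nq}c_ia^nu^i\in\m^n$ leaves an element of $R\cap(a^n\langle u^i: i>nq\rangle+a^{n+1}S)$. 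I then need a filtration argument to show that this is in $\m^{n+1}$, or that it is forced to vanish modulo $\m^{n+1}$. A plausible tool is Lemma~\ref{lem:growth}'s statement $\dim_k \m/\m^2=v=q+1$ applied to powers: the natural surjection $\bigoplus_{i\le nq}k\,a^nu^i\to\m^n/\m^{n+1}$, together with the Hilbert function $\ell(\m^n/\m^{n+1})=e$ for $n\gg0$, pins down the dimensions.

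Finally, the identity $aR\cap\m^{n+1}=a\m^n$ follows from the previous one. If $ar\in\m^{n+1}\subseteq a^{n+1}S$, then $r\in R\cap a^nS=\m^n$ by cancellation.
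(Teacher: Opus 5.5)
Your part (1) is correct. It only reorders the paper's argument. You get $\m=aL^q+(\m\cap a^2S)=aL^q+\m^2$, apply Nakayama to $\m$, and then cancel $a$. The paper instead first proves $K^q\cap aS=\m K^q$ and applies Nakayama to $K^q$. Your derivation of $S=L^{e-1}=R[u]$ directly from the basis of $S/aS$ is also fine.

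The gap is in part (2). You never prove the reverse inclusion $R\cap a^nS\subseteq\m^n$ when $nq<e-1$. Everything else in (2) rests on it, including $aR\cap\m^{n+1}=a\m^n$, which you derive from it.

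Your expansion route does not close as written. To show that the coefficients $c_i$ with $nq<i<e$ lie in $\m$, you need input you never supply. Even granting that, the remainder lands in $R\cap a^{n+1}S$, which is the same question one level higher.

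The Hilbert-function remark does not supply that input either. The surjection $\bigoplus_{i\le nq}k\,a^nu^i\to\m^n/\m^{n+1}$ only bounds $\ell_R(\m^n/\m^{n+1})$ from above. Its value for $n\gg0$ says nothing about small $n$. Your claim that $a^nu^i\in R$ only if $i\le nq$ is likewise unproved at this point.

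The missing idea is to apply the independence of $1,\overline u,\ldots,\overline u^{\,d}$ in $S/aS$ to elements with honest coefficients in $R$. This gives
\[
 L^d\cap aS=\m L^d\qquad(d\ge0).
\]
For $d\le e-1$: if $\sum_{i\le d}r_iu^i\in aS$ with $r_i\in R$, independence forces every $r_i\in\m$. For $d\ge e-1$: $L^d=S$ and $\m S=aS$.

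Then induct using the previous level rather than the current one. Take $x\in R\cap a^{n+1}S$. The inductive hypothesis gives $x\in R\cap a^nS=a^nL^{nq}$, so $x=a^ny$ with $y\in L^{nq}$. Cancelling $a^n$ in $x\in a^{n+1}S$ gives $y\in aS$, so $y\in L^{nq}\cap aS=\m L^{nq}$. Hence $x\in a^n\m L^{nq}=a^{n+1}L^{(n+1)q}=\m^{n+1}$. That is,
\[
 R\cap a^{n+1}S=(R\cap a^nS)\cap a^{n+1}S=a^n(L^{nq}\cap aS)=a^{n+1}L^{(n+1)q}.
\]
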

\begin{proof}
Lemma~\ref{lem:growth}(2) gives $\dim_k((K+aS)/aS)=2$.
Choose $u\in K$ such that
\[
 (K+aS)/aS=k+k\overline{u}.
\]
Since $S=R[K]$, we have $S/aS=k[\overline{u}]$, whence $1,\overline{u},\ldots,\overline{u}^{e-1}$ form a $k$-basis of $S/aS$ because $\dim_k S/aS = e$.
Put $L=R+Ru$. 

(1) Since $(K+aS)/aS=k+k\overline{u}$, we have
$K^q+aS=L^q+aS$.
Using $\m=aK^q$ and $\m\cap a^2S=\m^2$, we have
\[
 a(K^q\cap aS)=\m\cap a^2S=\m^2=a\m K^q.
\]
Hence, since $a$ is regular, $K^q\cap aS=\m K^q$. Thus
$K^q=K^q \cap (L^q + aS) = L^q+(K^q\cap aS)=L^q+\m K^q$, and Nakayama's lemma gives
$K^q=L^q$.
Thus $\m=aL^q$. Also $K\subseteq K^q=L^q\subseteq R[u]$, while
$u\in K$, so $S=R[u]$. The basis of $S/aS$ gives
$S=L^{e-1}+aS=L^{e-1}+\m S$. Nakayama's lemma yields $L^{e-1}=S$.

(2) We first show that
\[
 L^d\cap aS=\m L^d
\]
for every integer $d \ge 0$.
For $0\le d\le e-1$, if $\sum_{i=0}^d r_i u^i\in aS$ with
$r_i\in R$, then the independence of $1,\overline{u},\ldots,\overline{u}^d$ gives
$r_i\in\m$ for $0\le i\le d$. This proves
$L^d\cap aS\subseteq\m L^d$; the reverse inclusion follows from
$\m S=aS$. For $d\ge e-1$, we have
$L^d\cap aS=aS=\m S=\m L^d$.

Next, we prove
$$R\cap a^nS=a^nL^{nq}$$ for every $n \ge 0$ by induction on $n$.
When $n=0$, it is obvious. 
If it holds for $n$, then
\[
 \begin{aligned}
 R\cap a^{n+1}S
 &=(R\cap a^nS)\cap a^{n+1}S\\
 &=a^n(L^{nq}\cap aS)
 =a^n\m L^{nq}
 =a^{n+1}L^{(n+1)q}
 \end{aligned}
\]
since $\m = aL^q$. 
Hence, for every $n \ge 0$, we have the desired equality, in particular, $\m^n = (aL^q)^n = a^n L^{nq} = R \cap a^nS$. 
Finally, since $a$ is regular,
\[
 aR\cap\m^{n+1}
 =aR\cap a^{n+1}S
 =a(R\cap a^nS)=a\m^n
\]
for every $n \ge 0$.
\end{proof}

In what follows, fix $u \in K$ as in Lemma~\ref{lem:powers}.
Put $L=R+Ru$ and $\fkc=R:S$. 
We aim to prove $K\cap aS=\m K$.
By \cite[Lemma~3.5(2)]{GMP13}, $\fkc=K:S$.
We use $K:K=R$ from \cite[p.~19, Bemerkung~2.5(a)]{HK71} and the
canonical biduality formula $K:(K:M)=M$ for finite fractional ideals
$M$ containing a regular element; see \cite[p.~19, Definition~2.4(b)]{HK71}.
In particular, $K:\fkc=K:(K:S)=S$. Therefore
\[
 \begin{aligned}
 K:(R+a^{-1}\fkc)&=K\cap a(K:\fkc)=K\cap aS,\\
 K:(R:\m)&=K:(K:\m K)=\m K.
 \end{aligned}
\]
Thus the desired equality $K\cap aS=\m K$ follows from $R:\m=R+a^{-1}\fkc$.
Since $a$ is regular, $\Soc(R/aR)=a(R:\m)/aR$. Hence
$R:\m=R+a^{-1}\fkc$ is equivalent to
\[
 \Soc(R/aR)=(\fkc+aR)/aR.
\]
To prove this socle equality, put $s=\lceil(e-1)/q\rceil$. Then $L^{sq}=S$ by
Lemma~\ref{lem:powers}(1).

\begin{lemma}\label{lem:socle}
The following assertions hold true.
\begin{enumerate}[label=\textup{(\arabic*)}]
\item $\Soc(R/aR)=(\m^s+aR)/aR =(\fkc+aR)/aR$.
\item $\dim_k\Soc(R/aR)=e-1-(s-1)q$.
\end{enumerate}
\end{lemma}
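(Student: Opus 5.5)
The plan is to compute the $\m$-adic filtration of $R/aR$ explicitly from Lemma~\ref{lem:powers}, read off its graded pieces, and then check directly which elements are killed by $\m$.

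\textbf{Graded pieces.} By Lemma~\ref{lem:powers}(2), $\m^{n+1}=R\cap a^{n+1}S$. Hence multiplication by $a^{-n}$ gives an injection
\[
 \m^n/\m^{n+1}\hookrightarrow a^nS/a^{n+1}S\cong S/aS .
\]
Since $\m^n=\sum_{i\le nq}Ra^nu^i$, its image is the span of $\overline u^{\,i}$ for $0\le i\le\min(nq,e-1)$. By the basis statement in Lemma~\ref{lem:powers}(1), this gives $\dim_k\m^n/\m^{n+1}=\min(nq,e-1)+1$.

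Next put $G_n=(\m^n+aR)/(\m^{n+1}+aR)$. Using $aR\cap\m^n=a\m^{n-1}$ from Lemma~\ref{lem:powers}(2), we get $G_n\cong\m^n/(\m^{n+1}+a\m^{n-1})$. Inside $S/aS$ this is spanned by the $\overline u^{\,i}$ with $(n-1)q<i\le\min(nq,e-1)$. Consequently:
\begin{itemize}
\item $\dim G_0=1$;
\item $\dim G_n=q$ for $1\le n\le s-1$;
\item $\dim G_s=e-1-(s-1)q$;
\item $G_n=0$ for $n>s$.
\end{itemize}
These sum to $e=\ell_R(R/aR)$, which serves as a consistency check.

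\textbf{Inclusions.} Since $sq\ge e-1$, we have $\m^s=a^sL^{sq}=a^sS$. This is an $S$-module, so $\m^s\subseteq\fkc$. It also gives $\m^{s+1}=a\m^s\subseteq aR$, hence $(\m^s+aR)/aR\subseteq\Soc(R/aR)$. For the conductor, $\m\fkc=aL^q\fkc\subseteq a\fkc\subseteq aR$, so $(\fkc+aR)/aR\subseteq\Soc(R/aR)$. We thus have
\[
 (\m^s+aR)/aR\subseteq(\fkc+aR)/aR\subseteq\Soc(R/aR),
\]
and it remains to show $\Soc(R/aR)\subseteq(\m^s+aR)/aR$. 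Once this holds, (2) follows, since $(\m^s+aR)/aR=G_s$ has the computed dimension $e-1-(s-1)q$.

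\textbf{Main step: the reverse inclusion.} Let $x\in R$ with $x\notin\m^s+aR$. Choose $n<s$ with $x\in\m^n+aR$ but $x\notin\m^{n+1}+aR$. Modulo $\m^{n+1}+aR$, write $x\equiv a^n\sum_i r_iu^i$, where $(n-1)q<i\le nq$ (for $n=0$, $i=0$) and not all $r_i\in\m$. Let $d$ be the least index with $r_d$ a unit, and set $j=nq+1-d$, so that $1\le j\le q$. Then $y=au^j\in aL^q=\m$.

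The product $xy$ lies in $\m^{n+1}+aR$ and, modulo $\m^{n+2}+aR$, has lowest term $a^{n+1}r_du^{nq+1}$. All its other terms have exponents in $(nq,(n+1)q]$. Because $n\le s-1$, we have $nq<e-1$, so $nq+1\le e-1$. The monomial $\overline u^{\,nq+1}$ is therefore a nonzero basis vector of $G_{n+1}$, and the other terms are either independent of it or vanish in $S/aS$. Hence $xy\notin\m^{n+2}+aR$, and in particular $xy\notin aR$, so $x$ is not in the socle.

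The delicate point is the bookkeeping in this last step. One must check that the terms $a^nr_iu^i$ with $r_i\in\m$, and the elements of $\m^{n+1}+aR$ absorbed into the representation of $x$, contribute to $xy$ only inside $\m^{n+2}+aR$. This uses $\m\cdot\m^{n+1}\subseteq\m^{n+2}$ and $y(\m^{n+1}+aR)\subseteq\m^{n+2}+aR$. One must also confirm that no exponent exceeding $e-1$ can cancel the lowest term. Both checks follow from the description of $G_{n+1}$ inside $S/aS$ obtained above.
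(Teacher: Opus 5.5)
Your computation of the pieces $G_n$, the inclusions $(\m^s+aR)/aR\subseteq(\fkc+aR)/aR\subseteq\Soc(R/aR)$, and the deduction of (2) from $\dim_k G_s$ are correct. There is a genuine gap exactly at the point you flagged as delicate, and the description of $G_{n+1}$ does not close it. You take $d$ to be the \emph{least} index with $r_d\notin\m$. Then $xy$ also contains the terms $a^{n+1}r_iu^{\,i+nq+1-d}$ with $d<i\le nq$ and $r_i$ a unit. Their exponents reach $2nq+1-d$, and when $n=s-1$ they can exceed $e-1$. For $m\ge e$ the class $\overline{u}^{\,m}$ does not vanish in $S/aS$. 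It is only some $k$-combination of $1,\overline{u},\ldots,\overline{u}^{\,e-1}$, and its coefficient on $\overline{u}^{\,nq+1}$ can cancel $\overline{r_d}$. Nothing in Lemma~\ref{lem:powers} makes $\overline{u}$ nilpotent, since $u$ is only required to satisfy $(K+aS)/aS=k+k\overline{u}$.

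Here is a concrete failure. Let $H=\langle6,7,8,9\rangle$ over a field of characteristic $0$, let $a=t^6$, and take the admissible choice $u=1+t$, so that $(\overline{u}-1)^6=0$ in $S/aS=k[[t]]/(t^6)$. Then $q=3$, $e=6$, $s=2$.
\begin{enumerate}[label=\textup{(\roman*)}]
\item The element $x=a(15u-6u^2+u^3)=t^6(10+6t-3t^2+t^3)$ lies in $\m\setminus(\m^2+aR)$ because of its $t^9$ term, so $n=1$.
\item Your recipe gives $d=1$ and $y=au^3$.
\item But $xy=t^{12}(10+36t+45t^2+20t^3+t^6)\in aR$, since $16$ and $17$ are the only integers $\ge12$ outside $6+H$.
\end{enumerate}
So your $y$ does not show that $x$ lies outside the socle.

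The repair is the paper's choice, after which your argument is essentially the paper's. Take $d$ to be the \emph{largest} index in $\{(n-1)q+1,\ldots,nq\}$ with $r_d\notin\m$. The terms with $i<d$ then have exponents at most $nq$ and lie in $a\m^n\subseteq aR$. The terms with $i>d$ have $r_i\in\m$ and lie in $\m^{n+2}$. Hence the image of $xy$ in $G_{n+1}$ is exactly $\overline{r_d}\,\overline{u}^{\,nq+1}\neq0$, because $nq+1\le e-1$. With this choice no exponent ever exceeds $e-1$.
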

\begin{proof}
(1) Lemma~\ref{lem:powers}(1) and $sq\ge e-1$ give
$\m^s=a^sS$. Thus $\m^sS=\m^s\subseteq R$, so
$\m^s\subseteq\fkc$. Since $\fkc$ is an $S$-ideal and $\m S=aS$ by Lemma~\ref{lem:growth}(2),
\[
 \fkc\m=\fkc\m S=a\fkc\subseteq aR.
\]
In particular, $\m^{s+1}\subseteq aR$, and
\[
 (\m^s+aR)/aR\subseteq
 (\fkc+aR)/aR\subseteq\Soc(R/aR).
\]
It remains to prove that $\Soc(R/aR) \subseteq (\m^s+aR)/aR$.

Choose $b\in R$ such that $b+aR$ is a nonzero socle element. Thus
$b\notin aR$ and $\m b\subseteq aR$. Since $\m^{s+1}\subseteq aR$,
there is an integer $0\le n\le s$ such that
\[
 b\in\m^n+aR,\qquad b\notin\m^{n+1}+aR.
\]
If $n=0$, then $b$ is a unit and $\m b\subseteq aR$ gives $\m=aR$,
contrary to the fact that $R$ is not Gorenstein. Subtracting an element of
$aR$, we may assume $b\in\m^n$ with $1\le n\le s$.
We show that $n<s$ is impossible.

Suppose $n<s$. By Lemma~\ref{lem:powers}(2), we may write
\[
 b=a^n\sum_{i=0}^{nq}r_i u^i\qquad(r_i\in R).
\]
For $0\le i\le(n-1)q$, we have $a^nu^i\in a(aK^q)^{n-1} = a\m^{n-1}\subseteq aR$.
If all coefficients $r_i$ with $(n-1)q<i\le nq$ belonged to $\m$,
we would have $b\in aR+\m^{n+1}$, a contradiction.
Choose the largest $d\in\{(n-1)q+1,\ldots,nq\}$ with $r_d\notin\m$.
Then $1\le nq+1-d\le q$, so $au^{nq+1-d}\in aL^q=\m$.
Since $b\m\subseteq aR$ and $b\in\m^n$,
Lemma~\ref{lem:powers}(2) gives
\[
 (au^{nq+1-d})b\in aR\cap\m^{n+1}=a\m^n.
\]
Therefore there are $r'_0,\ldots,r'_{nq}\in R$ such that
\[
 a^{n+1}\sum_{i=0}^{nq}r_i u^{i+nq+1-d}
 =a^{n+1}\sum_{i=0}^{nq}r'_i u^i.
\]
Cancel the regular element $a^{n+1}$ and reduce modulo $aS$.
Since $r_i\in\m$ for $d<i\le nq$ and $\m S=aS$, we obtain
\[
 \sum_{i=0}^{d}\overline{r_i}\,\overline{u}^{i+nq+1-d}
 =\sum_{i=0}^{nq}\overline{r'_i}\,\overline{u}^i.
\]
Since $n<s$, we have $nq+1<e$. Independence of
$1,\overline{u},\ldots,\overline{u}^{nq+1}$
now gives $\overline{r_d}=0$, a contradiction. Thus $n=s$, proving (1).

(2) For the dimension calculation, multiplication by $a^s$ induces a
surjective $k$-linear map
\[
 S/aS\longrightarrow(\m^s+aR)/aR,\qquad
 x+aS\longmapsto a^sx+aR.
\]
This is well defined because $a^{s+1}S\subseteq aR$; its target is
annihilated by $\m$ by (1). Since
\[
 aR\cap\m^s=a\m^{s-1}=a^sL^{(s-1)q},
\]
the kernel is the image of $L^{(s-1)q}$ in $S/aS$, namely
$\langle1,\overline{u},\ldots,\overline{u}^{(s-1)q}\rangle_k$.
Consequently,
\[
 (\m^s+aR)/aR\cong
 \langle1,\overline{u},\ldots,\overline{u}^{e-1}\rangle_k
 /\langle1,\overline{u},\ldots,\overline{u}^{(s-1)q}\rangle_k.
\]
The numerator has dimension $e$, and the denominator has dimension
$(s-1)q+1$, since $(s-1)q<e-1$. This proves (2).
\end{proof}

We now complete the proof of Theorem~\ref{thm:main}.

\begin{proof}[Completion of the proof of Theorem~\ref{thm:main}]
Assume (1)--(3).
By Lemma~\ref{lem:socle}(1) and the formula for $\Soc(R/aR)$ above,
\[
 R:\m=R+a^{-1}\fkc.
\]
Therefore
\[
 K\cap aS=K:(R+a^{-1}\fkc)=K:(R:\m)=\m K.
\]
Since $\m S=aS$, this proves the intersection
assertion in (b).

The image of $K$ in $S/aS$ has basis $1,\overline{u}$, so
$K/\m K\cong(K+aS)/aS$ has dimension two. Hence $\type(R)=2$,
and Nakayama's lemma gives $K=R+Ru$. Together with $e>v$ and the
reduction property of $(a)$, Lemma~\ref{lem:powers}(1) gives (4).

Since $a$ is a parameter, $\dim_k\Soc(R/aR)=\type(R)=2$.
Lemma~\ref{lem:socle}(2) therefore gives
\[
 e=(s-1)q+3=(s-1)(v-1)+3.
\]
Together with $e>v$, this proves (a).
Finally $K^{e-1}=(R+Ru)^{e-1}=S$ by Lemma~\ref{lem:powers}(1).
Fix an integer $d$ with $0\le d<e-1$. The image of
$K^d$ in $S/aS$ is spanned by $1,\overline{u},\ldots,\overline{u}^d$ and has
dimension $d+1<e$. Thus $K^d\ne S$, proving the rest of (b).
\end{proof}

\section{Numerical semigroup rings}

We now apply Theorem~\ref{thm:main} under the additional assumption
that the ring $R$ in Notation~\ref{not:base} has the form $R=k[[H]]$
for a numerical semigroup $H$.
Write 
$$H=\langle a_1,\ldots,a_v\rangle = \{\lambda_1a_1 + \lambda_2 a_2  + \cdots + \lambda_v a_v\mid 0 \le \lambda_1, \lambda_2, \ldots , \lambda_v \in \mathbb{Z}\}$$
 with minimal generators
$0<a_1<\cdots<a_v$. Then $\m=(t^{a_1},\ldots,t^{a_v})$,
$v=\mu_R(\m)$, and $e=a_1$.
Let $F = \max(\mathbb{Z} \setminus H)$ denote the Frobenius
number of $H$. For $0\ne f\in k[[t]]$, write $\ord_t(f)$ for
the least exponent of $t$ with a nonzero coefficient in $f$. Put
\[
 \PF(H)=\{f\in\mathbb Z\setminus H\mid f+h\in H
                  \text{ for every }h\in H\setminus\{0\}\}.
\]
By the description of the canonical module in
\cite[Example~(2.1.9)]{GW78}, after multiplying by $t^F$, we may take
\[
 K=\sum_{f\in\PF(H)}Rt^{F-f}.
\]
In particular, $\type(R)=|\PF(H)|$.

\Needspace{17\baselineskip}
We recall the consequence of Matthews' formula used below.

\begin{lemma}[{\cite[Lemma~2.7]{Matthews04}}]\label{lem:matthews}
Suppose $H=\langle e,e+d,\ldots,e+(v-1)d\rangle$ is minimally generated,
where $d>0$, $v\ge3$, and $e>v$. Put
\[
 b=\left\lfloor\frac{e-2}{v-1}\right\rfloor,
 \qquad r=e-1-b(v-1).
\]
Then the following assertions hold true.
\begin{enumerate}[label=\textup{(\arabic*)}]
\item $\PF(H)=\{be+jd\mid b(v-1)<j\le e-1\}$ and
      $K=R+Rt^d+\cdots+Rt^{(r-1)d}$.
\item $\type(R)=r$, the unique integer in $[1,v-1]$ congruent to
      $e-1$ modulo $v-1$. In particular,
      $\type(R)=2$ if and only if $e\equiv3\pmod{v-1}$.
\end{enumerate}
\end{lemma}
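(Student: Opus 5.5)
The plan is to compute the Apéry set of $H$ with respect to $e$ explicitly, read off $\PF(H)$ as the maximal elements of that set (shifted by $-e$), and then translate into $K$ and $\type(R)$ via the description $K=\sum_{f\in\PF(H)}Rt^{F-f}$ recalled above.

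\emph{Step 1: a membership criterion.} Since $H$ is a numerical semigroup, $\gcd(e,d)=1$. An element of $H$ that is a sum of $k$ generators $e+i_1d,\ldots,e+i_kd$ with $0\le i_l\le v-1$ equals $ke+jd$ with $j=\sum i_l\in[0,k(v-1)]$. Conversely, every such pair $(k,j)$ occurs. Hence
\[
 H=\{ke+jd\mid k\ge0,\ 0\le j\le k(v-1)\}.
\]

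\emph{Step 2: the Apéry set.} The residues $jd$ for $0\le j\le e-1$ are pairwise distinct modulo $e$. I claim that the least element of $H$ in the class of $jd$ is
\[
 w_j=\lceil j/(v-1)\rceil e+jd .
\]
To see this, take any element $ke+j'd\in H$ in that class. Then $j'=j+te$ with $t\ge0$, and $k(v-1)\ge j'\ge j$, so $k\ge\lceil j/(v-1)\rceil$. Rewriting $ke+j'd=(k+td)e+jd$ shows it is at least $w_j$. Therefore
\[
 \mathrm{Ap}(H,e)=\{w_j\mid 0\le j\le e-1\}.
\]

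\emph{Step 3: maximality.} Recall that $f\in\PF(H)$ if and only if $f+e$ is maximal in $\mathrm{Ap}(H,e)$ with respect to $\le_H$. Equivalently, $w_j+(e+id)\notin\mathrm{Ap}(H,e)$ for every $1\le i\le v-1$. Now
\[
 w_j+e+id=(\lceil j/(v-1)\rceil+1)e+(j+i)d,
\]
and by Step 2 this lies in the Apéry set exactly when $j+i\le e-1$ and $\lceil (j+i)/(v-1)\rceil=\lceil j/(v-1)\rceil+1$. Write $e-1=b(v-1)+r$ with $1\le r\le v-1$, which is exactly the definition of $b$ and $r$ in the statement. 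Then such an $i\in[1,v-1]$ exists precisely when $\lceil j/(v-1)\rceil<\lceil (e-1)/(v-1)\rceil=b+1$. So the maximal elements are the $w_j$ with $b(v-1)<j\le e-1$, and for these $w_j=(b+1)e+jd$. Subtracting $e$ gives
\[
 \PF(H)=\{be+jd\mid b(v-1)<j\le e-1\}.
\]
I expect this bookkeeping with ceilings, namely checking the boundary cases $j+i\le e-1$ and the role of $r\ge1$, to be the main point requiring care. Everything else is formal.

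\emph{Step 4: $K$ and the type.} The Frobenius number is $F=\max\PF(H)=be+(e-1)d$. Hence $F-f=(e-1-j)d$, and as $j$ runs over $(b(v-1),e-1]$ this exponent runs over $0,d,\ldots,(r-1)d$. This gives $K=R+Rt^d+\cdots+Rt^{(r-1)d}$ and $\type(R)=|\PF(H)|=r$. Since $r\in[1,v-1]$ and $r\equiv e-1\pmod{v-1}$, $r$ is the unique such integer. Finally, $r=2$ if and only if $e\equiv3\pmod{v-1}$, which is assertion (2).
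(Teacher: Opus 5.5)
Your proof is correct, and it does more than the paper does. The paper gives no argument for this lemma. It takes the formula for $\PF(H)$ from \cite[Lemma~2.7]{Matthews04} and then reads off $K$ and $\type(R)$ from $K=\sum_{f\in\PF(H)}Rt^{F-f}$, which is exactly your Step~4. Your Steps~1--3 replace the citation with the standard self-contained route: compute $\mathrm{Ap}(H,e)$ explicitly, then find its $\le_H$-maximal elements. This makes the lemma independent of Matthews' paper, at the cost of the ceiling bookkeeping you anticipated.

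Three steps you assert without justification deserve a line each.
\begin{enumerate}
\item Maximality of $w\in\mathrm{Ap}(H,e)$ can be tested on the generators $e+id$ with $1\le i\le v-1$. The reason is that any element of $H$ that is $\le_H$ an element of $\mathrm{Ap}(H,e)$ itself lies in $\mathrm{Ap}(H,e)$, so $w$ is non-maximal exactly when $w+g\in\mathrm{Ap}(H,e)$ for some generator $g$. Moreover, $w+e\notin\mathrm{Ap}(H,e)$ always.
\item When $j+i\ge e$, the element $w_j+e+id=(\lceil j/(v-1)\rceil+1+d)e+(j+i-e)d$ lies in the class of $(j+i-e)d$, and $0\le j+i-e\le v-2<e$; here $e>v$ is used. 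Its $e$-coefficient is at least $2$, while that of $w_{j+i-e}$ is at most $1$. Hence this element is not in $\mathrm{Ap}(H,e)$.
\item For the existence half of your ceiling criterion, take $c=\lceil j/(v-1)\rceil\le b$. The witness $i=c(v-1)+1-j$ lies in $[1,v-1]$, and $r\ge1$ gives $j+i=c(v-1)+1\le e-1$.
\end{enumerate}
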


From the above lemma, we readily obtain the following as a corollary of Theorem~\ref{thm:main}.

\begin{corollary}\label{cor:semigroup}
The following conditions are equivalent.
\begin{enumerate}[label=\textup{(\arabic*)}]
\item $\tr(K^{v-1})=\m$ and $\tr(K^v)\subsetneq\m$.
\item $K^{v-1}\cong\m$ and $K^v\not\cong\m$ as $R$-modules.
\item $H=\langle e,e+d,\ldots,e+(v-1)d\rangle$ for some $d>0$,
      with $e>v$ and $\type(R)=2$.
\item $H=\langle e,e+d,\ldots,e+(v-1)d\rangle$ for some $d>0$,
      with $v\ge3$, $e>v$, and $e\equiv3\pmod{v-1}$.
\end{enumerate}
\end{corollary}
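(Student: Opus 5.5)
(1)$\Leftrightarrow$(2) is immediate from Theorem~\ref{thm:main}, since $R=k[[H]]$ satisfies Notation~\ref{not:base} with the canonical ideal $K=\sum_{f\in\PF(H)}Rt^{F-f}$. Here $R\subseteq K\subseteq \overline R=k[[t]]$ because $F-f\ge0$ and $f=F$ occurs. (3)$\Leftrightarrow$(4) follows from Lemma~\ref{lem:matthews}(2). Indeed, $v\ge3$ holds automatically under (3), because type two forces $R$ non-Gorenstein and hence $v\ge3$. Conversely, under (4) the lemma gives $\type(R)=2$. The substance is therefore (1)$\Leftrightarrow$(3), which we get by passing through condition (4) of Theorem~\ref{thm:main}.

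For (3)$\Rightarrow$(1), we verify Theorem~\ref{thm:main}(4) with $a=t^e$ and $u=t^d$.
\begin{itemize}
\item By Lemma~\ref{lem:matthews}(1) with $r=2$, we have $K=R+Rt^d$.
\item The ideal $(t^e)$ is a reduction of $\m$, since $e$ is the least nonzero element of $H$.
\item Also $t^e\in\m\setminus\m^2$, since $e$ is a minimal generator.
\item The generators of $\m$ are $t^{e+jd}=t^e u^j$ for $0\le j\le v-1$, so $\m\subseteq a(R+Ru+\cdots+Ru^{v-1})$.
\item For the reverse inclusion we need $au^j\in\m$ for $0\le j\le v-1$. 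This holds since $e+jd\in H\setminus\{0\}$ for each such $j$.
\item Since $S=R[K]=R[t^d]$, the remaining requirement $S=R[u]$ is automatic.
\end{itemize}
Together with $e>v$, Theorem~\ref{thm:main}(4) holds, and hence so does (1).

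For (1)$\Rightarrow$(3), Theorem~\ref{thm:main} gives $a$ and $u$ as in its (4), together with $\type(R)=2$ and $e>v$. The key step, and the main obstacle, is extracting an arithmetic progression from the non-graded data $a,u$. My plan is to use valuations $\ord_t$.
\begin{enumerate}
\item Since $(a)$ is a reduction of $\m$, $\ord_t(a)=e$. This holds because $e=\ell_R(R/aR)=\ord_t(a)$ for a nonzerodivisor in a semigroup ring.
\item Since $u\in K\subseteq k[[t]]$, write $d=\ord_t(u)$. If $d=0$, subtract the constant term $c$. Replacing $u$ by $u-c$ does not change $R+Ru$, $R[u]$ or the sums $R+Ru+\cdots+Ru^{v-1}$, so we may assume $d>0$. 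Note $d=0$ with $u$ a unit would otherwise need separate treatment; the subtraction handles it.
\item The value set of $\m=a(R+Ru+\cdots+Ru^{v-1})$ is then contained in $e+\{h+jd\mid h\in H,\ 0\le j\le v-1\}$.
\item Each minimal generator $a_i$ of $H$ is a value of $\m$ not expressible as a sum of two nonzero elements of $H$. Since $e+h+jd$ with $h\neq0$ would be such a sum (as $e+jd\in H$ because $au^j\in\m$), every minimal generator lies in $\{e+jd\mid 0\le j\le v-1\}$.
\item There are exactly $v$ minimal generators and $v$ candidate values, so $H=\langle e,e+d,\ldots,e+(v-1)d\rangle$.
\end{enumerate}
This yields (3). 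The care needed is that valuations of a sum can exceed the valuations of the summands. This is harmless here, because we only need the containment of value sets, and that holds since $\ord_t$ of a sum is always at least the minimum of the orders of the summands.
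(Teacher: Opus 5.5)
Your arguments for (1)$\Leftrightarrow$(2), (3)$\Leftrightarrow$(4) and (3)$\Rightarrow$(1) are correct and agree with the paper. The gap is item~3 of (1)$\Rightarrow$(3).

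The inequality $\ord_t(x+y)\ge\min\{\ord_t(x),\ord_t(y)\}$ only bounds the order of $\sum_j r_ju^j$ from below. It does not show that this order lies in $\{h+jd\mid h\in H,\ 0\le j\le v-1\}$, and after cancellation it need not. This is more than a missing justification: the claimed containment is false for admissible $u$.

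Take $H=\langle5,12,19\rangle$, so $v=3$, $e=5$, $\PF(H)=\{26,33\}$ and $K=R+Rt^7$. Let $a=t^5$ and $u=t^5+t^7$.
\begin{itemize}
\item $K=R+Ru$ and $R+Ru+Ru^2=R+Rt^7+Rt^{14}$, so $\m=a(R+Ru+Ru^2)$.
\item $S=R[u]=R[t^7]$.
\end{itemize}
Thus $a,u$ satisfy everything in Theorem~\ref{thm:main}(4). Yet your $d=\ord_t(u)$ equals $5$, and $12\in H\setminus\{0\}$ does not lie in $5+\{h+5j\mid h\in H,\ 0\le j\le 2\}$. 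Your items~4--5 would then force every minimal generator into $\{5,10,15\}$, which is absurd. The element $u-t^5=t^7$ is exactly the cancellation your closing remark dismisses.

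The missing observation is that $R+Ru+\cdots+Ru^{v-1}=(R+Ru)^{v-1}=K^{v-1}$ does not depend on $u$, and $K$ itself is monomial. Since $\type(R)=2$, the description $K=\sum_{f\in\PF(H)}Rt^{F-f}$ gives $K=R+Rt^{d}$ with $d=F-f>0$, where $f$ is the pseudo-Frobenius number other than $F$.

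Then $K^{v-1}=\sum_{i=0}^{v-1}Rt^{id}$ is exactly the set of power series supported on $E=\bigcup_{i=0}^{v-1}(id+H)$. So every nonzero element of $K^{v-1}$ has order in $E$, whatever cancellation occurs. Combined with your (correct) $\ord_t(a)=e$, this gives $H\setminus\{0\}=\bigcup_{i=0}^{v-1}(e+id+H)$.

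Your items~4--5 then finish the proof with this $d$; now $e+jd\in H$ because $at^{jd}\in aK^{v-1}=\m$. This is the paper's argument: it works with the monomial generator $t^d$ of $K$ and never uses the element $u$.
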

\begin{proof}
Conditions (1) and (2) are equivalent by Theorem~\ref{thm:main}.

Assume (1). Theorem~\ref{thm:main} gives $\type(R)=2$ and $e>v$.
Choose $d>0$ such that $K=R+Rt^d$. By Theorem~\ref{thm:main}(4),
choose $a\in\m$ such that $\m=aK^{v-1}$ and $(a)$ is a reduction of $\m$.
Then $\ord_t(a)=e$, since $1\in K^{v-1}\subseteq k[[t]]$. As
$K^{v-1}=\sum_{i=0}^{v-1}Rt^{id}$, we have
\[
 H\setminus\{0\}=\bigcup_{i=0}^{v-1}(e+id+H).
\]
Thus $H=\langle e,e+d,\ldots,e+(v-1)d\rangle$.
Since $H$ has $v$ minimal generators, this generating set is minimal.
This proves (3).

Assume (3), and choose $d>0$ as there. Lemma~\ref{lem:matthews}
gives $K=R+Rt^d$, and hence
\[
 \m=t^e(R+Rt^d+\cdots+Rt^{(v-1)d}),\qquad S=R[t^d].
\]
Obviously, $(t^e)$ is a reduction of $\m$ because the order $e$ is the multiplicity of $R$.
Hence Theorem~\ref{thm:main}(4) implies (1).
Finally, (3) and (4) are equivalent by Lemma~\ref{lem:matthews}(2).
\end{proof}

For comparison with the last possible exponent $v-1$, the traces can
be computed at every exponent for the arithmetic progressions above.
The case $j=1$ of the following proposition follows from the nearly
Gorenstein property proved by Herzog, Hibi, and Stamate
\cite[Proposition~1.4]{HHS21}.

\begin{proposition}\label{prop:arithmetic-traces}
In the setting of Lemma~\ref{lem:matthews}, suppose $r\ge2$.
For every integer $j\ge1$,
\[
 \tr(K^j)=\m\quad\Longleftrightarrow\quad j(r-1)\le v-1.
\]
\end{proposition}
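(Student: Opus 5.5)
The plan is to compute everything on the level of exponents, using the explicit description of $K$ from Lemma~\ref{lem:matthews}(1). Since $K=R+Rt^d+\cdots+Rt^{(r-1)d}$, for every $j\ge1$ we get
\[
 K^j=\sum_{i=0}^{N}Rt^{id},\qquad N=j(r-1).
\]
All fractional ideals involved are monomial. Hence $R:K^j$ is spanned by the monomials $t^x$ with $x+id\in H$ for all $0\le i\le N$. Consequently $\tr(K^j)=(R:K^j)K^j$ is the monomial ideal spanned by the $t^{x+id}$ with such $x$ and $0\le i\le N$. Since $\m$ is generated by $t^{e+ld}$ for $0\le l\le v-1$, the condition $\tr(K^j)\supseteq\m$ means that each exponent $e+ld$ can be written as $x+id$ with $x$ as above and $0\le i\le N$. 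By Lemma~\ref{lem:trace}(1), $\tr(K^j)\supseteq\m$ is the same as $\tr(K^j)=\m$, because $R$ is not Gorenstein when $r\ge2$.

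First suppose $N\le v-1$. For $l\le N$, take $x=e$ and $i=l$. Here $e\in R:K^j$ because $e+id$ with $0\le i\le N\le v-1$ is a generator of $H$. For $N<l\le v-1$, take $x=e+(l-N)d$ and $i=N$. Then $x+id=e+md$ with $l-N\le m\le l\le v-1$, and these are again generators, so $x$ lies in $R:K^j$. Thus every generator of $\m$ lies in the trace.

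Conversely, suppose $N\ge v$ and $t^e\in\tr(K^j)$, say $e=x+id$ with $t^x\in R:K^j$. Then $x\in H$, and $x\neq0$: otherwise $K^j\subseteq R$, which would force $d\in H$. That is impossible, since then $e+d$ would not be a minimal generator. Because $e$ is the smallest nonzero element of $H$, $x=e$ and $i=0$. Then $e+vd\in H$ is required, as $v\le N$. The remaining step is to show $e+vd\notin H$.

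Every element of $H$ has the form $ke+md$ with $k\ge0$ and $0\le m\le k(v-1)$. Suppose $e+vd=ke+md$. The case $k=1$ gives $m=v>v-1$, which is not allowed. For $k\ge2$ we get $(k-1)e=(v-m)d$. Since $\gcd(e,d)=1$ (as $H$ is numerical), $d\mid k-1$; writing $k-1=td$ with $t\ge1$ gives $v-m=te\ge e>v$, so $m<0$, a contradiction. Hence $t^e\notin\tr(K^j)$, and $\tr(K^j)\subsetneq\m$.

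The only delicate point is this non-membership $e+vd\notin H$, which rests on $\gcd(e,d)=1$ and $e>v$. The rest is bookkeeping with monomial exponents.
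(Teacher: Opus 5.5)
Your proof is correct and follows the paper's argument. Both use the same description $K^j=\sum_{i=0}^{j(r-1)}Rt^{id}$, essentially the same elements $t^{e+ld}\in R:K^j$ for the forward direction, and the same obstruction $e+vd\notin H$ (from $\gcd(e,d)=1$ and $e>v$) for the converse. The differences are cosmetic: you track monomial exponent sets where the paper argues with $\ord_t$, and you divide out $d$ rather than $e$ in the final divisibility step; you should also dismiss the trivial case $k=0$ explicitly.
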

\begin{proof}
Fix an integer $j\ge1$. By Lemma~\ref{lem:matthews},
$K^j=\sum_{i=0}^{j(r-1)}Rt^{id}$. Suppose $j(r-1)\le v-1$.
For $0\le i\le v-1-j(r-1)$, we have
$t^{e+id}K^j\subseteq\m$, so $t^{e+id}\in R:K^j$.
Multiplying these elements by $1,t^d,\ldots,t^{j(r-1)d}$ gives every
generator $t^e,t^{e+d},\ldots,t^{e+(v-1)d}$ of $\m$. Hence
\[
 \m\subseteq(R:K^j)K^j=\tr(K^j)\subseteq\m,
\]
where the last inclusion follows from Lemma~\ref{lem:trace}(1).

Suppose $j(r-1)\ge v$. Since $K^j\not\subseteq R$, we have
$R:K^j\subseteq\m$. Also $K^j\subseteq k[[t]]$. Thus, to prove
$t^e\notin\tr(K^j)$, it suffices to show that $R:K^j$ contains no
element of order $e$. If $x\in R:K^j$ has order $e$, then
$t^{vd}\in K^j$ gives $\ord_t(xt^{vd})=e+vd\in H$.
But then there are integers $p\ge1$ and $0\le h\le p(v-1)$ with
\[
 e+vd=pe+hd.
\]
If $p=1$, then $h=v$, a contradiction. Suppose that $p\ge2$. Then
$(p-1)e=(v-h)d>0$. Since $\gcd(e,d)=1$, we obtain
$e\le v-h\le v$, again a contradiction. Hence $R:K^j$ contains no
element of order $e$, and $t^e\notin(R:K^j)K^j=\tr(K^j)$.
\end{proof}

\begin{example}\label{ex:sharp}
For every $v\ge3$, let
\[
 H=\langle v+2,v+3,\ldots,2v+1\rangle
\]
and $R=k[[H]]$. The semigroup $H$ is minimally generated by the indicated
$v$ integers, since all are
smaller than twice the least generator. Its multiplicity is $e=v+2$,
so $e>v$ and $e\equiv3\pmod{v-1}$. By Corollary~\ref{cor:semigroup},
$\tr(K^{v-1})=\m$ but $R$ is not almost Gorenstein.
Thus the exponent $v$ in Proposition~\ref{prop:ag}(3) cannot be replaced
by $v-1$. For instance, when $v=4$, the ring of
$H=\langle6,7,8,9\rangle$ has
$K=R+Rt$ and its first three canonical powers have trace $\m$,
whereas the fourth does not.
\end{example}

\begin{example}
The condition $K^v\not\cong\m$ in Theorem~\ref{thm:main}(2) cannot be
omitted. Let $H=\langle4,6,7,9\rangle$ and $R=k[[H]]$. Then
$\PF(H)=\{2,3,5\}$, $K=R+Rt^2+Rt^3$, and
$\m K=\m$. Hence $R$ is almost Gorenstein. Moreover,
$K^2=S=t^{-4}\m$, so $K^3=K^4\cong\m$.
Since $v=4$, we have $K^{v-1}\cong\m$, whereas $\sigma(R)=\infty$.
\end{example}

\begin{example}
Let $H=\langle8,9,10,21\rangle$ and $R=k[[H]]$. Then $v=4$,
$\PF(H)=\{11,22,23\}$, and
\[
 K=R+Rt+Rt^{12},\qquad
 K^2=R+Rt+Rt^2+Rt^{13},\qquad
 \m=t^8K^2.
\]
Since $t^8\in R:K^2$, Lemma~\ref{lem:trace}(1) gives
\[
 \m=t^8K^2\subseteq(R:K^2)K^2=\tr(K^2)\subseteq\m.
\]
On the other hand, $t^3\in K^3$ and $11\notin H$,
so no element of $R:K^3$ has order eight. Since $R:K^3\subseteq\m$
and $K^3\subseteq k[[t]]$, we have $t^8\notin\tr(K^3)$.
Therefore $\sigma(R)=2=v-2$.
\end{example}

The index $\sigma(R)$ suggests a classification problem for nearly
Gorenstein rings beyond the case settled by Theorem~\ref{thm:main}.

\begin{conjecture}\label{conj:v-2}
Let $R=k[[H]]$ with $v\ge4$ and $\sigma(R)=v-2$. Then $\type(R)\in\{2,3\}$,
and $\type(R)=3$ if and only if $K^{v-2}\cong\m$, in which case
$\m=t^{e}K^{v-2}$.
\end{conjecture}

When $v=4$, the assertion $\type(R)\in\{2,3\}$ in
Conjecture~\ref{conj:v-2} follows from
\cite[Theorem~2.4]{MS21}, where Moscariello and Strazzanti proved that
nearly Gorenstein numerical semigroups with four minimal generators
have type at most three.

A separate question concerns the ring-theoretic meaning of arithmetic
progression generation. Corollary~\ref{cor:semigroup} answers this for
type two and multiplicity greater than $v$. If
$H=\langle e,e+d,\ldots,e+(v-1)d\rangle$ has type three and $e>v$,
Lemma~\ref{lem:matthews} and Proposition~\ref{prop:arithmetic-traces}
give
\[
 K=R+Rt^d+Rt^{2d},\qquad
 \tr(K^j)=\m\quad\Longleftrightarrow\quad 2j\le v-1
 \qquad(j\ge1).
\]

\begin{question}\label{q:arithmetic}
Which conditions on a one-dimensional Cohen--Macaulay local ring,
expressed in terms of its canonical fractional ideal and maximal ideal,
characterize arithmetic progression generation when specialized to
numerical semigroup rings of type three and multiplicity greater than
$v$?
\end{question}

\bibliographystyle{amsplain}
\bibliography{refs}
\end{document}